\numberwithin{equation}{section}
\newtheorem{theorem}{Theorem}
\newtheorem{lemma}{Lemma}
\begin{document}

\title[Fractional Parts and their Relations to the Zeta Function]{Fractional Parts and their Relations to the Values of the Riemann Zeta Function}
\markright{The Sum of Fractional Parts}
\author{Ibrahim M. Alabdulmohsin}

\address{King Abdullah University of Science and Technology (KAUST),\\
Computer, Electrical and Mathematical Sciences \& Engineering Division\\ 
Thuwal, 23955-6900, Saudi Arabia\\
Tel: +966-5656-77-485\\
Email: ibrahim.alabdulmohsin@kaust.edu.sa}

\maketitle

\begin{abstract}
A well-known result, due to Dirichlet and later generalized by de la Vall\'ee-Poussin, expresses a relationship between the sum of fractional parts and the Euler-Mascheroni constant. In this paper, we prove an asymptotic relationship between the summation of the products of fractional parts with powers of integers on one hand, and the values of the Riemann zeta function, on the other hand. Dirichlet's classical result falls as a particular case of this more general theorem. 
\end{abstract}

\section{Background}

In 1849, Dirichlet established a relationship between the Euler-Mascheroni constant $\gamma = 0.5772\cdots$ and the average of fractional parts. More specifically, writing $[x]$ for the integral (floor) part of the number $x\in\mathbb{R}$ and $\{x\} = x-[x]$ for its fractional part, Dirichlet proved that \cite{pillichshammer2010euler,lagarias2013euler}: 
\begin{equation}\label{dirchlet::first::eq}
\frac{1}{n}\sum_{x=1}^n \Big\{\frac{n}{x}\Big\} = 1-\gamma+ O\Big(\frac{1}{\sqrt{n}}\Big)
\end{equation} 
This surprising connection between $\gamma$ and the average of fractional parts was, in turn, used by Dirichlet to prove that the number of divisors of an integer $n$ is of the order $\log n$. The technique introduced by Dirichlet to prove these results is often called the \lq\lq hyperbola method", which is a counting argument to the number of lattice points that lie beneath a curve \cite{stopple2003primer,lagarias2013euler}. 

The error term in (\ref{dirchlet::first::eq}) is known to be pessimistic. Finding the optimal exponent $\theta>0$ such that: 
\begin{equation*}
\sum_{x=1}^n \Big\{\frac{n}{x}\Big\} - (1-\gamma)n = O(n^{\theta+\epsilon}),
\end{equation*} 
for any $\epsilon>0$ is known as the Dirichlet divisor problem, which remains unsolved to this date. A well-known result of Hardy is that $\theta\ge \frac{1}{4}$, which is conjectured to be the true answer to this problem \cite{lagarias2013euler}. 


In 1898, de la Vall\'ee-Poussin generalized (\ref{dirchlet::first::eq}). He showed that for any integer $w\in\mathbb{N}$:
\begin{equation}\label{poussin::first::eq}
\frac{w}{n}\,\sum_{x=1}^{\frac{n-1}{w}} \Big\{\frac{n}{wx+1}\Big\} =  1-\gamma + O\Big(\frac{1}{\sqrt{n}}\Big)
\end{equation} 
As noted by de la Vall\'ee-Poussin, this result is quite remarkable because the limiting average of the fractional parts remains unchanged regardless of the arithmetic progression that one wishes to use \cite{lagarias2013euler}. 

More recently, Pillichshammer obtained a different generalization of Dirichlet's result. He showed that for any $\beta>1$:
\begin{equation}\label{Pillichshammer::first::eq}
\sum_{x=1}^{\sqrt[\beta]{n}} \Big\{\frac{n}{x^\beta}\Big\} = (1-\gamma_{1/\beta})\sqrt[\beta]{n} + O\big(n^{\frac{1}{\beta+1}}\big),
\end{equation} 
where $\gamma_{1/\beta}$ is a family of constants whose first term is $\gamma_1 = \gamma$ \cite{pillichshammer2010euler}. 

In this paper, we look into a different line of generalizing (\ref{dirchlet::first::eq}). Specifically, we address the question of deriving the asymptotic expressions to summations of the form: 
\begin{equation}\label{eq::fs}
f_s(n) = \sum_{x=1}^n \Big\{\frac{n}{x}\Big\} x^s,
\end{equation}
for non-negative integers $s\in\mathbb{Z}^+$. This is the summation of the products of fractional parts and powers of integers. Clearly, the case where $s=0$ reduces to Dirichlet's classical result. Interestingly, we will show that the asymptotic behavior of this summation is connected to the values of the Riemann zeta function $\zeta(s)$, and we will recover Dirichlet's result in (\ref{dirchlet::first::eq}) as a particular case. More specifically, we prove that:
\begin{equation}
\frac{1}{n^{s+1}}\sum_{x=1}^n \Big\{\frac{n}{x}\Big\}\,x^{s} =  \frac{1}{s} -\frac{\zeta(s+1)}{s+1} + O\Big(\frac{1}{\sqrt{n}}\Big)
\end{equation} 

The Riemann zeta function $\zeta(s)$ is a function of the complex variable defined in $\mathcal{R}(s)>1$ by the absolutely converging series: 
\begin{equation}\label{zeta_def}
\zeta(s) = \sum_{x=1}^\infty \frac{1}{x^s},
\end{equation}
and throughout the complex plane $\mathbb{C}$ by analytic continuation. It is a meromorphic function with a simple pole at $s=1$ with residue 1.

We conclude this section with two classical theorems that we will rely on in our proofs. 
\begin{theorem}[Abel Summation Formula]\label{abel_sum_theorem}
Let $a_x$ be a sequence of complex numbers and $\phi(x)$ be a function of class $\mathbb{C}^1$. Then: 
\begin{equation}
\sum_{1\le x\le n} a_x \phi(x) = A(x)\phi(x) - \int_1^x A(t) \phi'(t) dt,
\end{equation}
where $A(x) = \sum_{k=1}^{[x]} a_x$. 
\end{theorem}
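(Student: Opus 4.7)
The plan is to prove the identity by classical summation by parts (the discrete Abel transform), and then to convert the resulting telescoping sum of finite differences into the required integral via the fundamental theorem of calculus.

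First, I would extend the partial-sum sequence by setting $A(0) = 0$, so that $a_x = A(x) - A(x-1)$ holds for every positive integer $x$. Substituting into the left-hand side and reindexing the shifted copy of the sum gives the standard Abel transformation
\begin{equation*}
\sum_{x=1}^n a_x \phi(x) = \sum_{x=1}^n \bigl[A(x) - A(x-1)\bigr]\phi(x) = A(n)\phi(n) - \sum_{x=1}^{n-1} A(x)\bigl[\phi(x+1) - \phi(x)\bigr].
\end{equation*}
The boundary term $A(n)\phi(n)$ already matches what is required, so the problem reduces to identifying the remaining finite sum with $\int_1^n A(t)\phi'(t)\,dt$.

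Since $\phi \in C^1$, the fundamental theorem of calculus allows me to write $\phi(x+1) - \phi(x) = \int_x^{x+1} \phi'(t)\,dt$ for every integer $x$. The definition $A(t) = \sum_{k \le [t]} a_k$ extends $A$ from the integers to a right-continuous step function on $[1,\infty)$ that is constant and equal to $A(x)$ on the half-open interval $[x, x+1)$. Consequently $A(x) \int_x^{x+1} \phi'(t)\,dt = \int_x^{x+1} A(t)\phi'(t)\,dt$, and summing over $x = 1, 2, \dots, n-1$ concatenates the subintervals into $[1,n]$, producing the desired integral representation $\sum_{x=1}^{n-1} A(x)\bigl[\phi(x+1) - \phi(x)\bigr] = \int_1^n A(t)\phi'(t)\,dt$. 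Combining this with the Abel transformation above gives the claimed formula.

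There is no substantive obstacle here; the identity is essentially an algebraic rearrangement paired with the fundamental theorem of calculus. The only point that truly deserves care is the implicit promotion of $A$ from integer arguments to a step function on the real line via the floor bracket, which is exactly what makes the right-hand side integral meaningful when $\phi'$ is only assumed continuous.
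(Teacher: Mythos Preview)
Your argument is correct and is exactly the standard derivation of Abel summation: telescope via $a_x = A(x)-A(x-1)$, then replace the finite differences of $\phi$ by integrals using the fundamental theorem of calculus and the fact that $A$ is a step function constant on each $[x,x+1)$. There is nothing to fault.

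As for comparison with the paper: there is no proof to compare against. The paper states Theorem~\ref{abel_sum_theorem} as a classical prerequisite and simply refers the reader to the literature (``These results can be found in many places, such as \cite{HardyDiverg,LampretEM2001}''), so your proposal in fact supplies an argument the paper omits.
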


\begin{theorem}[Euler-Maclaurin Summation Formula]\label{theorem::euler_maclaruin}
We have: 
\begin{equation}
\sum_{x=1}^{n} \phi(x)  = C + \int_1^{n} \phi(t) dt + \sum_{k=1}^{s-1} \frac{B_k}{k!} \phi^{(k)}(n) + O(\phi^{(s)}(n)),
\end{equation}
for some constant $C$, where $B_1 = \frac{1}{2}, B_2 = \frac{1}{6}, B_3 = 0, \ldots$ are the Bernoulli numbers. 
\end{theorem}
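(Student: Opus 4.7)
The plan is to derive the formula by iterated integration by parts, peeling off one derivative of $\phi$ at a time from the difference between $\sum_{x=1}^n \phi(x)$ and $\int_1^n \phi(t)\,dt$. The natural tool is the sequence of Bernoulli polynomials $B_j(x)$ characterized by $B_0(x) = 1$, $B_j'(x) = j\,B_{j-1}(x)$, and $\int_0^1 B_j(x)\,dx = 0$ for $j \ge 1$; their values at integers recover the Bernoulli numbers $B_j = B_j(0)$, and the periodized versions $\widetilde{B}_j(t) := B_j(\{t\})$ are uniformly bounded on $\mathbb{R}$, which is the property that drives the error estimate.

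First, on each unit interval $[k,k+1]$ I would integrate by parts with $u = \phi(t)$ and $dv = dt$, choosing the antiderivative $v = \widetilde{B}_1(t) = \{t\} - \tfrac{1}{2}$ (rather than $t$) so that boundary terms at integers carry the Bernoulli values. This gives
\[
\int_k^{k+1} \phi(t)\,dt \;=\; \tfrac{1}{2}\bigl(\phi(k)+\phi(k+1)\bigr) \;-\; \int_k^{k+1} \widetilde{B}_1(t)\,\phi'(t)\,dt,
\]
and summing over $k = 1,\dots,n-1$ yields the first-order Euler--Maclaurin identity relating $\sum_{x=1}^n \phi(x)$ to $\int_1^n \phi$, the boundary values $\tfrac{1}{2}(\phi(1)+\phi(n))$, and a remainder integral of $\phi'$ against $\widetilde{B}_1$.

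Next, I would iterate: the remainder integral is attacked again by integration by parts, using $\widetilde{B}_{j+1}(t)/(j+1)$ as an antiderivative of $\widetilde{B}_j(t)$, which is legitimate because $\widetilde{B}_{j+1}$ is continuous for $j \ge 1$ and takes the value $B_{j+1}$ at every integer. After $s-1$ such steps, each stage deposits a boundary contribution of the form $\frac{B_j}{j!}\phi^{(j-1)}(n)$ at the upper end together with $-\frac{B_j}{j!}\phi^{(j-1)}(1)$ at the lower end. Since the lower boundary values do not depend on $n$, they combine with the $\tfrac{1}{2}\phi(1)$ from the first step and with the limit of the tail of the remainder integral into a single $n$-independent constant, which is the $C$ appearing in the statement; the upper boundary values assemble into the sum $\sum_{k=1}^{s-1} \frac{B_k}{k!}\phi^{(k)}(n)$ under the paper's indexing convention.

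The main obstacle is controlling the final remainder $R_s(n) = \frac{(-1)^{s-1}}{s!}\int_1^n \widetilde{B}_s(t)\,\phi^{(s)}(t)\,dt$ and showing it is genuinely $O(\phi^{(s)}(n))$, not just $O(\int |\phi^{(s)}|)$. To do this I would assume (as is implicit in the intended applications of the theorem) that $\phi^{(s)}$ is eventually monotone and decays, so the improper integral $R_s(\infty)$ converges absolutely and can be absorbed into $C$. Then the tail $\int_n^\infty \widetilde{B}_s(t)\,\phi^{(s)}(t)\,dt$ is handled by one further integration by parts using the bounded antiderivative $\widetilde{B}_{s+1}/(s+1)$: the boundary term at $n$ is exactly $O(\phi^{(s)}(n))$, and the new integral is of strictly smaller order under the same regularity assumption. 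Pinning down the precise hypotheses that legitimize this last step -- essentially monotone decay of $\phi^{(s)}$ -- is the subtle point; for the functions $\phi(x) = x^s$ and their shifts that are used elsewhere in the paper, all these conditions are automatic.
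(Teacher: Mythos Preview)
Your outline is the standard Bernoulli-polynomial derivation of Euler--Maclaurin and is essentially correct, including your identification of the real subtlety: the stated error bound $O(\phi^{(s)}(n))$ is not true for arbitrary $\phi$ and needs a hypothesis such as eventual monotone decay of $\phi^{(s)}$, which you rightly note holds for the power-type functions actually used later in the paper.

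However, there is nothing to compare against: the paper does not prove this theorem. It is quoted as a classical result, with the line ``These results can be found in many places, such as \cite{HardyDiverg,LampretEM2001}'' immediately after the statement. So your proposal is not an alternative to the paper's argument but a supplied proof where the paper offers only a citation. If your goal was to match the paper, the appropriate move is simply to cite a standard reference; if your goal was to fill the gap, your sketch does so along the conventional lines found in those references.
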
 
These results can be found in many places, such as \cite{HardyDiverg,LampretEM2001}.

\section{Notation} 
We will use the following notation: 
\begin{itemize} 
\item $[x]$ denotes the integral (floor) part of $x$ and $\{x\}=x-[x]$ denotes the fractional part.
\item $\mathbb{N}$ denotes the set of positive integers, often called the \emph{natural} numbers; $\mathbb{Z}^+$ is the set of non-negative integers; $\mathbb{R}$ is the set of real numbers; $\mathbb{C}$ is the set of complex numbers. 
\item $\mathcal{R}(s)$ denotes the real part of $s\in\mathbb{C}$.
\item $|\mathbb{S}|$ denotes the size (cardinality) of the set $\mathbb{S}$. 
\end{itemize} 

\section{The Fractional Transform} 

\subsection{Overview} The key insight we will employ to derive the asymptotic expansion of the function $f_s(n)$ in (\ref{eq::fs}) is that we can solve this problem indirectly by answering a \emph{different} question, first. Specifically, we will be interested in the following function: 
\begin{equation}\label{Phi_eq}
\Phi_s(n) =  \sum_{x=1}^{n-1} \Big[\Big\{\frac{n}{x}\Big\} - \Big\{\frac{n}{x+1}\Big\}\Big] x^s
\end{equation}
More generally, when:
\begin{equation}\label{Phi_eq}
\Phi(n) =  \sum_{x=1}^{n-1} \Big[\Big\{\frac{n}{x}\Big\} - \Big\{\frac{n}{x+1}\Big\}\Big] \phi(x),
\end{equation}
we will call $\Phi(n)$ is the \emph{fractional transform} of $\phi(n)$. The reason $\Phi_s(n)$ allows us to answer our original question is because: 
\begin{align*}
\Phi_s(n) = \sum_{x=1}^{n-1} \Big[\Big\{\frac{n}{x}\Big\} - \Big\{\frac{n}{x+1}\Big\}\Big] x^s = \sum_{x=1}^n \Big\{\frac{n}{x}\Big\} \Big(x^s-(x-1)^s\Big)
\end{align*} 
By expanding the right-hand side using the binomial theorem, we obtain a method of solving our original question.

\subsection{Prelimenary Results}
Next, we present a few useful lemmas related to the fractional transform defined above. Before we do this, we introduce the following symbol: 
\begin{equation}
\partial(n) = \Big\{x\in \mathbb{N}: \;\Big[\frac{n}{x+1}, \frac{n}{x}\Big]\cap \mathbb{N}\neq \emptyset\Big\}
\end{equation} 
In other words, $\partial(n)$ is the set of positive integers that are less than $n$, and for which the interval $[n/(x+1), n/x]$ contains, at least, one integer. For instance, $2\in\partial(5)$ because the interval $[5/3, 5/2]$ contains the integer two, whereas $3\notin \partial(5)$ because the interval $[5/3, 5/4]$ lies strictly between the integer one and the integer two. 

\begin{lemma}\label{fract::transform::value::lemma}
\begin{equation}
\Big\{\frac{n}{x}\Big\} - \Big\{\frac{n}{x+1}\Big\} = \frac{n}{x(x+1)} - \Big|\Big[\frac{n}{x+1}, \frac{n}{x}\Big]\cap \mathbb{N}\Big|,
\end{equation} 
where $|\mathbb{S}|$ denotes the size (cardinality) of the set $\mathbb{S}$. 
\end{lemma}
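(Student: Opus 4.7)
The proof I have in mind is essentially an unpacking of the definition of the fractional part, followed by a standard lattice-point counting identification. Writing $\{y\}=y-[y]$ for every real $y$ and substituting into the left-hand side,
\begin{equation*}
\Big\{\frac{n}{x}\Big\} - \Big\{\frac{n}{x+1}\Big\} = \Big(\frac{n}{x}-\frac{n}{x+1}\Big) - \Big([n/x] - [n/(x+1)]\Big) = \frac{n}{x(x+1)} - \Big([n/x] - [n/(x+1)]\Big).
\end{equation*}
So the lemma reduces to showing that the integer difference $[n/x] - [n/(x+1)]$ equals the cardinality $\bigl|[n/(x+1),n/x]\cap\mathbb{N}\bigr|$.

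For this step I would use the elementary fact that $[y]$ counts the positive integers that are at most $y$, so for any $a\le b$ the difference $[b]-[a]$ counts integers $m$ satisfying $a<m\le b$. Applying this with $a=n/(x+1)$ and $b=n/x$ (noting $n/(x+1)<n/x$ since $x\ge 1$) gives the number of integers in the half-open interval $(n/(x+1),n/x]$. In the generic case where $n/(x+1)$ is not itself an integer, this count coincides with the number of integers in the closed interval $[n/(x+1),n/x]$, which is exactly the right-hand side.

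The only genuine obstacle, and it is a minor bookkeeping issue, is the boundary case in which $n/(x+1)$ happens to be an integer, i.e.\ $(x+1)\mid n$. I would handle this by verifying the identity by direct substitution in that case: when $n=(x+1)k$ we have $\{n/(x+1)\}=0$, $[n/(x+1)]=k$, and $n/x=k+k/x$, from which both sides can be checked to agree under the convention that the left endpoint of the interval is counted once. Once this boundary convention is fixed, the lemma follows immediately from the two-line calculation above, so I expect no deeper technical difficulty beyond being careful about how endpoints of $[n/(x+1),n/x]$ are counted.
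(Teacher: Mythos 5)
Your first step---unpacking $\{y\}=y-[y]$ and reducing to the lattice-point count $[n/x]-[n/(x+1)]$---is exactly what the paper does, and you were right to flag the boundary case $(x+1)\mid n$ as the one needing care; the paper's two-line proof silently glosses over it. However, your conclusion for that case is wrong. The quantity $[n/x]-[n/(x+1)]$ counts the integers in the \emph{half-open} interval $(n/(x+1),\,n/x]$, which excludes the left endpoint precisely when $n/(x+1)$ is an integer, whereas $\bigl|[n/(x+1),n/x]\cap\mathbb{N}\bigr|$ as written includes it. So in that case the two sides differ by one; they do not agree. A concrete check: take $n=6$, $x=2$. Then $\{n/x\}-\{n/(x+1)\}=\{3\}-\{2\}=0$, while $n/(x(x+1))-\bigl|[2,3]\cap\mathbb{N}\bigr|=1-2=-1$. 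What your boundary analysis actually reveals is that the lemma as stated is off by one whenever $(x+1)\mid n$: the interval should be the half-open $(n/(x+1),\,n/x]$, or equivalently the right-hand side should read $n/(x(x+1))-\bigl([n/x]-[n/(x+1)]\bigr)$, which is the expression you already derived. Rather than asserting that both sides agree under a ``convention that the left endpoint is counted once,'' you should have let the direct substitution lead you to this correction.
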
 
\begin{proof}
We have: 
\begin{align*}
\Big\{\frac{n}{x}\Big\} - &\Big\{\frac{n}{x+1}\Big\} = \Big(\frac{n}{x} - \frac{n}{x+1}\Big) - \Big(\Big[\frac{n}{x}\Big]-\Big[\frac{n}{x+1}\Big]\Big)\\
&= \frac{n}{x(x+1)}  - \Big[\frac{n}{x}\Big] + \Big[\frac{n}{x+1}\Big] = \frac{n}{x(x+1)} - \Big|\Big[\frac{n}{x+1}, \frac{n}{x}\Big]\cap \mathbb{N}\Big|
\end{align*}
\end{proof} 
\begin{lemma}\label{fract::transform::value::lemma::2}
If $x\ge \sqrt{n}$ and $x\in\partial(n)$, then: 
\begin{equation*} 
\{\frac{n}{x}\} - \{\frac{n}{x+1}\} = \frac{n}{x(x+1)} - 1
\end{equation*}
\end{lemma}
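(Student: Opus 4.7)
The plan is to reduce to Lemma \ref{fract::transform::value::lemma} and control the number of integers in the interval $[n/(x+1), n/x]$. By that lemma,
$$\Big\{\frac{n}{x}\Big\} - \Big\{\frac{n}{x+1}\Big\} = \frac{n}{x(x+1)} - \Big|\Big[\frac{n}{x+1}, \frac{n}{x}\Big]\cap \mathbb{N}\Big|,$$
so the entire statement reduces to showing that, under the hypotheses $x \geq \sqrt{n}$ and $x \in \partial(n)$, the interval $[n/(x+1), n/x]$ contains exactly one positive integer.

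First, I would observe that the length of this interval is precisely $n/x - n/(x+1) = n/\bigl(x(x+1)\bigr)$. The key step is to note that $x \geq \sqrt{n}$ implies $x(x+1) > x^2 \geq n$, so that the length is strictly less than $1$. Any closed interval of length strictly less than $1$ can contain at most one integer (if it contained two distinct integers $m < m'$, then $m' - m \geq 1$ would exceed the interval's length). Combined with the assumption $x \in \partial(n)$, which by definition ensures at least one integer lies in the interval, the cardinality is exactly $1$.

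Substituting this value into the identity from Lemma \ref{fract::transform::value::lemma} then yields the claimed expression $n/(x(x+1)) - 1$. The only thing worth double-checking is the edge case $x = \sqrt{n}$ (when $n$ is a perfect square), but here $x(x+1) = n + \sqrt{n} > n$, so the strict inequality on the interval length still holds. There is no serious obstacle; the argument is a direct combination of Lemma \ref{fract::transform::value::lemma} with an elementary length estimate.
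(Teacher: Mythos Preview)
Your proposal is correct and follows essentially the same argument as the paper: both reduce to Lemma~\ref{fract::transform::value::lemma} and then observe that for $x\ge\sqrt{n}$ the interval $[n/(x+1),\,n/x]$ has length $n/(x(x+1))<1$, so it contains at most one integer, which together with $x\in\partial(n)$ forces the cardinality to be exactly~$1$. Your write-up is in fact a bit more careful (handling the edge case $x=\sqrt{n}$ explicitly and spelling out why length~$<1$ forbids two integers), but the route is identical.
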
 
\begin{proof}
Because the interval $[n/(x+1), n/x]$ can contain, at most, a unique integer since:
\begin{equation*}
\frac{n}{x} - \frac{n}{x+1} = \frac{n}{x(x+1)} \le \frac{n}{\sqrt{n}(\sqrt{n}+1)} <1
\end{equation*} 
This fact and Lemma \ref{fract::transform::value::lemma} bot imply the statement of the lemma.  
\end{proof} 

\section{Main Results}
We begin with the following lemma: 
\begin{lemma}\label{lemma::3}
For all $s\in\mathbb{R}$:
\begin{equation}
 \sum_{x=1}^{n^\epsilon} \Big[\Big\{\frac{n}{x}\Big\} - \Big\{\frac{n}{x+1}\Big\}\Big] x^s = O(n^{\epsilon s}),
\end{equation}
whose the $O(\cdot)$ constant depends on $s$. 
\end{lemma}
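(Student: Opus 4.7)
The plan is to exploit the telescoping structure hidden inside the bracket $\{n/x\}-\{n/(x+1)\}$ by a simple Abel-style rearrangement (precisely the identity the paper already advertises in the overview of Section~3). This turns the weighted telescoping sum into a single sum of $\{n/x\}$ against the forward difference $x^{s}-(x-1)^{s}$, plus harmless boundary contributions.

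Concretely, setting $N=\lfloor n^{\epsilon}\rfloor$, I would split
\[
\sum_{x=1}^{N}\Bigl[\Bigl\{\tfrac{n}{x}\Bigr\}-\Bigl\{\tfrac{n}{x+1}\Bigr\}\Bigr]x^{s}
= \sum_{x=1}^{N}\Bigl\{\tfrac{n}{x}\Bigr\}x^{s}-\sum_{x=1}^{N}\Bigl\{\tfrac{n}{x+1}\Bigr\}x^{s},
\]
shift the index $y=x+1$ in the second sum, and recombine common terms to obtain
\[
\Bigl\{\tfrac{n}{1}\Bigr\}\cdot 1^{s}-\Bigl\{\tfrac{n}{N+1}\Bigr\}N^{s}+\sum_{x=2}^{N}\Bigl\{\tfrac{n}{x}\Bigr\}\bigl(x^{s}-(x-1)^{s}\bigr).
\]
Since $n$ is a positive integer, $\{n/1\}=0$, so the first boundary term vanishes. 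The second boundary term is immediately $O(N^{s})=O(n^{\epsilon s})$ because $\{n/(N+1)\}<1$.

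For the remaining sum I would use the trivial bound $\{n/x\}<1$ together with the fact that $x^{s}-(x-1)^{s}$ is of constant sign in $x$ for a fixed $s$. Thus
\[
\Bigl|\sum_{x=2}^{N}\Bigl\{\tfrac{n}{x}\Bigr\}\bigl(x^{s}-(x-1)^{s}\bigr)\Bigr|
\le \sum_{x=2}^{N}\bigl|x^{s}-(x-1)^{s}\bigr|,
\]
and this bounding sum telescopes: for $s\ge 0$ it equals $N^{s}-1=O(n^{\epsilon s})$, and for $s<0$ it equals $1-N^{s}=O(1)$, comfortably absorbed into the $s$-dependent implicit constant. Combining this with the boundary estimate yields the claimed $O(n^{\epsilon s})$ bound.

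There is no real obstacle here: the entire proof is a one-line summation by parts followed by trivial estimates. The only content is recognizing that the telescoping cancellation in $\sum(x^{s}-(x-1)^{s})$ saves a full factor of $N$ compared with the naive triangle-inequality bound $\sum|\{n/x\}-\{n/(x+1)\}|\,x^{s}$, which would only give $O(N^{s+1})$.
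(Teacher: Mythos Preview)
Your argument is correct for $s>0$ and follows the same underlying idea as the paper---Abel summation---but in a cleaner one-step form. The paper first proves the auxiliary bound $|g_n(w)|\le w$ for $g_n(w)=\sum_{x\le w}[\{n/x\}-\{n/(x+1)\}]\,x$ and then applies the Abel formula with $\phi(x)=x^{s-1}$; you instead sum by parts directly with $\phi(x)=x^s$, exploiting that the \emph{unweighted} partial sums $\sum_{x\le w}(\{n/x\}-\{n/(x+1)\})=-\{n/(w+1)\}$ already telescope to $O(1)$. That saves the intermediate lemma about $g_n$.

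One genuine slip: for $s<0$ your conclusion ``$1-N^{s}=O(1)$, comfortably absorbed into the $s$-dependent implicit constant'' does not give $O(n^{\epsilon s})$, since $n^{\epsilon s}\to 0$ and no fixed constant multiple of it can dominate a quantity of order $1$. In fact the lemma as stated is false for $s<0$ (e.g.\ for odd $n$ the single term $x=2$ contributes $\tfrac12(2^{s}-1)$, a nonzero constant). The paper's own proof shares this flaw---its bound $\int_1^{n^\epsilon}t^{s-1}\,dt\le n^{\epsilon s}/s$ is equally invalid for $s<0$---and since the lemma is only invoked later with $s>1$, neither defect affects the main theorems.
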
 
\begin{proof} 
First, let us consider the following function:
\begin{equation*}
g_n(w) =  \sum_{x=1}^{w} \Big[\Big\{\frac{n}{x}\Big\} - \Big\{\frac{n}{x+1}\Big\}\Big] x
\end{equation*}
Since: 
\begin{equation*}
\sum_{x=1}^{w} \Big\{\frac{n}{x}\Big\} = \sum_{x=1}^{w} \Big\{\frac{n}{x}\Big\} x - \sum_{x=1}^{w} \Big\{\frac{n}{x}\Big\} (x-1) = g_n(w) + \Big\{\frac{n}{w+1}\Big\}w
\end{equation*} 
we obtain:
\begin{equation*}
g_n(w)  = - \Big\{\frac{n}{w+1}\Big\} w + \sum_{x=1}^{w} \Big\{\frac{n}{x}\Big\} = O(w)
\end{equation*}
More precisely, we conclude that: 
\begin{equation*}
-x\le g_n(x)\le x
\end{equation*}
Using this fact and the Abel summation formula in Theorem \ref{abel_sum_theorem}: 
\begin{align*}
 \sum_{x=1}^{n^\epsilon}& \Big[\Big\{\frac{n}{x}\Big\} - \Big\{\frac{n}{x+1}\Big\}\Big] x^s =  \sum_{x=1}^{n^\epsilon} \Big[\Big\{\frac{n}{x}\Big\} - \Big\{\frac{n}{x+1}\Big\}\Big] x x^{s -1}\\
&= n^{\epsilon(s-1)} g_n(n^\epsilon) - (s-1) \int_1^{n^\epsilon} g_n(t)\, t^{s-2} dt\\
&\le  n^{\epsilon s} - (s-1) \int_1^{n^\epsilon} g_n(t)\, t^{s-2} dt\\
&\le  n^{\epsilon s} + |s-1| \int_1^{n^\epsilon} \, t^{s-1} dt \le \Big(1 + \frac{|s-1|}{s}\Big) n^{\epsilon s}
\end{align*}
Similarly: 
\begin{align*}
 \sum_{x=1}^{\sqrt{n}}& \Big[\Big\{\frac{n}{x}\Big\} - \Big\{\frac{n}{x+1}\Big\}\Big] x^s \ge  -\Big(1 + \frac{|s-1|}{s}\Big) n^{\epsilon s}
\end{align*}
Therefore, the statement of the lemma follows. 
\end{proof} 

Now, we are ready to prove our first main result. 
\begin{theorem}\label{thoerem::sum::xs}
For any $s>1$:
\begin{equation*}
\frac{1}{n^s} \sum_{x=1}^{n-1} \Big[\Big\{\frac{n}{x}\Big\} - \Big\{\frac{n}{x+1}\Big\}\Big] x^s = \frac{1}{s-1} -1 + \zeta(s) + O\Big(\frac{1}{\sqrt{n}}\Big)
\end{equation*} 
\end{theorem}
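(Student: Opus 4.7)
The strategy is a Dirichlet-hyperbola-style split at $x = \sqrt n$. By Lemma \ref{lemma::3} with $\epsilon = 1/2$, the contribution from $1 \le x \le \sqrt n$ is $O(n^{s/2}) = o(n^{s-1/2})$ and absorbed into the claimed error. For $x > \sqrt n$, Lemmas \ref{fract::transform::value::lemma} and \ref{fract::transform::value::lemma::2} together give
$\{n/x\} - \{n/(x+1)\} = \frac{n}{x(x+1)} - \mathbf{1}[x\in\partial(n)]$,
so the tail of $\Phi_s(n)$ splits as $A(n) - B(n)$ where
\[
A(n) = n\sum_{\sqrt n < x \le n-1} \frac{x^{s-1}}{x+1},
\qquad
B(n) = \sum_{\substack{\sqrt n < x \le n-1 \\ x \in \partial(n)}} x^s.
\]

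To evaluate $A(n)$, I would write $\frac{x^{s-1}}{x+1} = x^{s-2} + O(x^{s-3})$ and apply the Euler--Maclaurin formula (Theorem \ref{theorem::euler_maclaruin}) to $\sum x^{s-2}$. The main integral $\int_{\sqrt n}^{n-1} t^{s-2}\,dt = \frac{n^{s-1}}{s-1} - \frac{n^{(s-1)/2}}{s-1} + O(n^{s-2})$ gives, after multiplying by $n$, the expansion
$A(n) = \frac{n^s}{s-1} - \frac{n^{(s+1)/2}}{s-1} + O(n^{s-1/2})$, provided the Bernoulli boundary terms and the residual $\sum \frac{x^{s-2}}{x+1}$ are shown to stay within the error.

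For $B(n)$, I would reindex via the bijection implicit in the proof of Lemma \ref{fract::transform::value::lemma::2}: when $x > \sqrt n$ and $x \in \partial(n)$, the interval $(n/(x+1), n/x]$ has length $<1$ and contains a unique integer $y$ with $x = \lfloor n/y \rfloor$, and conversely each $y \in \{2, \ldots, \lfloor \sqrt n \rfloor\}$ yields one such $x > \sqrt n$ in range (while $y=1$ would map to $x = n$, which is outside). Hence $B(n) = \sum_{y=2}^{\lfloor \sqrt n \rfloor} \lfloor n/y \rfloor^s$. Approximating $\lfloor n/y \rfloor^s = (n/y)^s + O((n/y)^{s-1})$ and using the Euler--Maclaurin tail estimate $\sum_{y>\sqrt n} y^{-s} = \frac{n^{(1-s)/2}}{s-1} + O(n^{-s/2})$ to separate $\zeta(s)$ off $\sum_{y=1}^{\sqrt n} y^{-s}$, one derives an expansion of $B(n)$ whose leading piece is $n^s\zeta(s)$ together with a $\frac{n^{(s+1)/2}}{s-1}$ ``ghost'' mirroring the one in $A(n)$. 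Subtracting, the ghost terms cancel, and collecting the remaining constants yields the stated asymptotic.

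The main obstacle is uniform error bookkeeping across $s>1$: the auxiliary sums $\sum y^{-(s-1)}$ and $\sum y^{2-s}$ appearing in the remainders change from convergent to divergent as $s$ crosses $2$, so one must verify separately in the regimes $1<s<2$, $s=2$, and $s>2$ that every Euler--Maclaurin correction, tail residue, and floor-approximation error stays within $O(n^{s-1/2})$.
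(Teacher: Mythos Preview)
Your proposal follows essentially the same route as the paper: split at $\sqrt n$, invoke Lemma~\ref{lemma::3} on the short range, use Lemmas~\ref{fract::transform::value::lemma}--\ref{fract::transform::value::lemma::2} to write the long range as $A(n)-B(n)$, reindex $B(n)$ as $\sum_{y=2}^{\lfloor\sqrt n\rfloor}\lfloor n/y\rfloor^s$, and then evaluate $A$ and $B$ via Euler--Maclaurin and the partial-$\zeta$ expansion so that the $n^{(s+1)/2}/(s-1)$ ``ghost'' terms cancel. Your floor estimate $\lfloor n/y\rfloor^s=(n/y)^s+O((n/y)^{s-1})$ is in fact the correct replacement for the paper's line $[n/x]^s=(n/x)^s-\{n/x\}^s$ (which is only literally true when $s=1$), and your remark about separate bookkeeping in the regimes $1<s<2$, $s=2$, $s>2$ is precisely what is needed to make the $O(n^{s-1/2})$ bound go through uniformly.
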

\begin{proof}

We split the sum into two parts: 
\begin{equation}
\Phi_s(n)  = \sum_{x=1}^{\sqrt{n}} \Big[\Big\{\frac{n}{x}\Big\}-\Big\{\frac{n}{x+1}\Big\}\Big] x^s + \sum_{x=\sqrt{n}+1}^{n-1} \Big[\Big\{\frac{n}{x}\Big\}-\Big\{\frac{n}{x+1}\Big\}\Big] x^s
\end{equation} 

The first term is $O(n^{\frac{s}{2}})$ as proved in the previous lemma, which is $O(n^{s-\frac{1}{2}})$ when $s\ge 1$.
Next, we examine the second term. We have by Lemma \ref{fract::transform::value::lemma::2}:
\begin{align*}
 \sum_{x={\sqrt{n}+1}}^{n-1}& \Big[\Big\{\frac{n}{x}\Big\} - \Big\{\frac{n}{x+1}\Big\}\Big] x^s =  \sum_{x={\sqrt{n}}+1}^{n-1} \frac{n\,x^s}{x(x+1)} - \sum_{x=\sqrt{n}+1}^{n-1} \Big|\Big[\frac{n}{x+1}, \frac{n}{x}\Big]\cap \mathbb{N}\Big|\,x^s\\
&=  \sum_{x=\sqrt{n}+1}^{n-1} \frac{n\,x^s}{x(x+1)} - \sum_{x=2}^{\sqrt{n}-1} \Big[\frac{n}{x}\Big]^s \\
&=   \sum_{x=\sqrt{n}+1}^{n-1} \frac{n\,x^s}{x(x+1)} -\sum_{x=2}^{\sqrt{n}-1} \Big(\frac{n}{x}\Big)^s+ \sum_{x=2}^{\sqrt{n}-1} \Big\{\frac{n}{x}\Big\}^s\\
&=   \sum_{x=\sqrt{n}+1}^{n-1} \frac{n\,x^s}{x(x+1)} - n^s \sum_{x=2}^{\sqrt{n}-1} \frac{1}{x^s} + O(\sqrt{n})\\
\end{align*}
Using the fact that for $\mathcal{R}(s)>0$ \cite{limit_zeta_rep}: 
\begin{equation*}
\zeta(s) = \sum_{x=1}^w \frac{1}{x^s} +  \frac{w^{1-s}}{s-1}  -s \int_w^\infty \frac{\{t\}}{t^{s+1}}dt,
\end{equation*}
we conclude that:
\begin{equation*}
 \sum_{x=1}^w \frac{1}{x^s}  = \frac{w^{1-s}}{1-s} + \zeta(s) + O(w^{-s})
\end{equation*} 
Alternatively, the error term $O(w^{-s})$ in the above expression can be derived from the Euler-Maclaurin summation formula in Theorem \ref{theorem::euler_maclaruin}. Hence: 
\begin{align*}
n^s \sum_{x=2}^{\sqrt{n}-1}\frac{1}{x^s} &= n^s \Big( \frac{n^{\frac{1-s}{2}}}{1-s} + \zeta(s)-1 + O(n^{-\frac{s}{2}})\Big) = (\zeta(s)-1)\,n^s + \frac{n^{\frac{1+s}{2}}}{1-s} + O(n^{\frac{s}{2}}),
\end{align*} 

Finally, we look into the remaining term: 
\begin{align*}
n\sum_{x=\sqrt{n}+1}^{n-1} \frac{x^{s-1}}{x+1} &= n \Big[\sum_{x=\sqrt{n}+1}^{n-1} \frac{x^{s-2}}{1 + \frac{1}{x}} \Big] \\
&= n \Big[\sum_{x=\sqrt{n}+1}^{n-1} {x^{s-2}}\big(1- \frac{1}{x} + \frac{1}{x^2} - \cdots\big) \Big] \\
&= n \Big[\sum_{x=\sqrt{n}+1}^{n-1} x^{s-2}- x^{s-3} + \cdots \Big]\\
&= n \Big[\frac{n^{s-1}}{s-1}- \frac{n^{\frac{1}{2}(s-1)}}{s-1} -\frac{n^{\frac{1}{2}(s-2)} + n^{s-2}}{2} \Big] + O(n^{\frac{1}{2}(s-1)} + n^{s-2})\\
&= \frac{n^s}{s-1} +\frac{n^{\frac{1}{2}(s+1)}}{1-s} + O(n^{s-\frac{1}{2}}), 
\end{align*} 
for $s\ge 1$. 
This follows from the Euler-Maclaurin summation formula. More specifically, we have for $u\ge 2$: 
\begin{align*}
\sum_{x=1}^{n-1}& x^{s-u} = C(u) + \frac{n^{s-u+1}}{s-u+1} - \frac{n^{s-u}}{2} + \sum_{k=2}^{m} {{s-u}\choose {k-1}} \frac{B_k}{k} \;n^{s-u-k+1}+ O(n^{s-u-m})\\
\sum_{x=1}^{\sqrt{n}}& x^{s-u} = C(u) +  \frac{n^{\frac{s-u+1}{2}}}{s-u+1} + \frac{n^{\frac{s-u}{2}}}{2} + \sum_{k=2}^{m}{{s-u}\choose {k-1}} \frac{B_k}{k} \;n^{\frac{s-u-k+1}{2}}+ O(n^{\frac{s-u-m}{2}}),\\
\end{align*} 
for some constant $C(u)$ that is independent of $n$. 

Putting everything together, we conclude that for $s>1$: 
\begin{align*}
 \sum_{x=1}^{n-1} \Big[\Big\{\frac{n}{x}\Big\} - \Big\{\frac{n}{x+1}\Big\}\Big] x^s &= \Big[\frac{1}{s-1} -1 + \zeta(s)\Big]\, n^s + O\big(n^{s-\frac{1}{2}}\big),
\end{align*}
which is the statement of the theorem.
\end{proof} 

Theorem \ref{thoerem::sum::xs} is illustrated in Fig \ref{thoerem::sum::xs::figure}. Clearly, this theorem generalizes Dirichlet's result, as promised earlier, because: 
\begin{equation*}
\lim_{s\to 1} \Big\{\frac{1}{s-1} -\zeta(s) \Big\} = -\gamma 
\end{equation*}
and the fact that: 
\begin{equation*}
 \sum_{x=1}^{n-1} \Big[\Big\{\frac{n}{x}\Big\} - \Big\{\frac{n}{x+1}\Big\}\Big]\,x =   \sum_{x=1}^{n-1} \Big\{\frac{n}{x}\Big\} \big(x - (x-1)\big)  = \sum_{x=1}^{n} \Big\{\frac{n}{x}\Big\}
\end{equation*} 

\begin{figure}[t]
\includegraphics[width=\columnwidth]{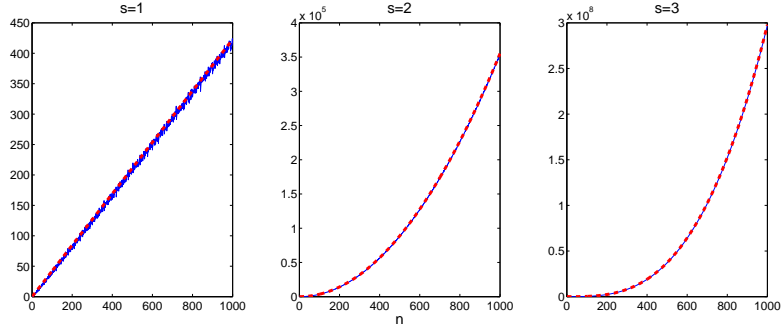}
\caption{A comparison between the values of the fractional transform of $x^s$ (marked in blue) and the asymptotic expression derived in Theorem \ref{thoerem::sum::xs} (marked in red). The $x$-axis is $n$ while the $y$-axis is $\Phi(n)$. The left, middle, and right figures correspond to $s=1,\, s=2$ and $s=3$ respectively.}\label{thoerem::sum::xs::figure}
\end{figure}

Now, we are ready to derive the asymptotic expression of the function $f_s(n)$ given in (\ref{eq::fs}). 
\begin{theorem}\label{theorem::fract::average::zeta}
For all $s\in\mathbb{Z}^+$: 
\begin{equation*}
\frac{1}{n^{s+1}}\sum_{x=1}^n \Big\{\frac{n}{x}\Big\}\,x^{s} =  \frac{1}{s} -\frac{\zeta(s+1)}{s+1} + O\Big(\frac{1}{\sqrt{n}}\Big)
\end{equation*} 
\end{theorem}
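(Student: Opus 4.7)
The natural strategy is to connect $f_s(n)$ back to the fractional transform $\Phi_{s+1}(n)$, whose asymptotic was already worked out in Theorem~\ref{thoerem::sum::xs}. Starting from the identity noted in Section~3,
$$\Phi_{s+1}(n) = \sum_{x=1}^n \Big\{\frac{n}{x}\Big\}\,\bigl[x^{s+1}-(x-1)^{s+1}\bigr],$$
I would expand $(x-1)^{s+1}$ by the binomial theorem. The top-degree term peels off cleanly as $(s+1)x^s$, leaving
$$\Phi_{s+1}(n) = (s+1)\,f_s(n) \;+\; \sum_{k=0}^{s-1}\binom{s+1}{k}(-1)^{s-k}\,f_k(n),$$
which I can invert to express $f_s(n)$ in terms of $\Phi_{s+1}(n)$ and lower-order sums $f_k(n)$.

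For $s \ge 1$, Theorem~\ref{thoerem::sum::xs} applied with exponent $s+1 > 1$ gives $\Phi_{s+1}(n) = C_{s+1}\,n^{s+1} + O(n^{s+1/2})$ for an explicit constant $C_{s+1}$, while each $f_k(n)$ with $k \le s-1$ satisfies the trivial bound $|f_k(n)| \le \sum_{x=1}^n x^k = O(n^{k+1}) = O(n^s)$ (using $\{n/x\} \le 1$). The entire subdominant sum is therefore $O(n^s)$, which is absorbed by the error $O(n^{s+1/2})$. Dividing by $(s+1)\,n^{s+1}$ then yields the claimed asymptotic, provided $C_{s+1}/(s+1)$ simplifies to $\tfrac{1}{s}-\tfrac{\zeta(s+1)}{s+1}$. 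The case $s=0$ lies outside the scope of Theorem~\ref{thoerem::sum::xs} (which requires $s > 1$), but it is precisely Dirichlet's identity~(\ref{dirchlet::first::eq}) quoted in the background; as an informal consistency check, $\lim_{s\to 0}\bigl[\tfrac{1}{s}-\tfrac{\zeta(s+1)}{s+1}\bigr] = 1-\gamma$, using the Laurent expansion of $\zeta$ around its simple pole.

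The main (though routine) obstacle will be the algebraic bookkeeping of the leading constant: the coefficient furnished by Theorem~\ref{thoerem::sum::xs}, once divided by $s+1$, must collapse exactly to $\tfrac{1}{s}-\tfrac{\zeta(s+1)}{s+1}$. The error estimates and the negligibility of the $f_k$'s with $k<s$ are immediate, so no induction on $s$ or delicate zeta-function manipulation is needed.
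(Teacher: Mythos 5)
Your approach is essentially the same as the paper's, which uses the identity $\Phi_s(n)=\sum_{x=1}^n\{n/x\}\bigl(x^s-(x-1)^s\bigr)$, isolates $s\,f_{s-1}(n)$ from the binomial expansion, bounds the lower-order $f_{s-k}(n)$ trivially by $O(n^{s-1})$, and divides — you merely shift the index by one and work with $\Phi_{s+1}\to f_s$. One heads-up for your ``routine algebraic bookkeeping'': the simplification $\tfrac{1}{s(s+1)}+\tfrac{1}{s+1}=\tfrac{1}{s}$ closes only if the leading constant from Theorem~\ref{thoerem::sum::xs} is $\tfrac{1}{s}+1-\zeta(s+1)$, which is what the intermediate computations in that proof actually produce and what the paper's own proof of this theorem silently uses, even though the displayed statement of Theorem~\ref{thoerem::sum::xs} carries the opposite sign on the $-1+\zeta(s)$ part.
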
 
\begin{proof}
Let $f_s(n)$ be as defined in (\ref{eq::fs}). Then, writing by Theorem \ref{thoerem::sum::xs}:
\begin{align*}
\Big[\frac{1}{s-1} +1 &-\zeta(s)\Big] \,n^s + O(n^{s-\frac{1}{2}}) =  \sum_{x=1}^{n-1} \Big[\Big\{\frac{n}{x}\Big\} - \Big\{\frac{n}{x+1}\Big\}\Big] x^s\\
&=  \sum_{x=1}^{n-1} \Big\{\frac{n}{x}\Big\} \,\Big(x^s-(x-1)^s\Big) = \sum_{x=1}^{n-1} \Big\{\frac{n}{x}\Big\}  \sum_{k=1}^s (-1)^{k+1} \,{s\choose k} x^{s-k}\\
&=\sum_{k=1}^s (-1)^{k+1}\,{s\choose k} f_{s-k}(n) = s f_{s-1}(n) + \sum_{k=2}^s (-1)^{k+1}\,{s\choose k} f_{s-k}(n)
\end{align*} 
Hence for $s\in\mathbb{N}$: 
\begin{equation*}
f_{s-1}(n) = \Big[\frac{1}{s-1} -\frac{\zeta(s)}{s}\Big] \,n^s + O(n^{s-\frac{1}{2}}),
\end{equation*} 
which implies the statement of the theorem.
\end{proof} 

\section{Conclusion}
In this paper, we generalized Dirichlet's classical result on the connection between the  Euler-Mascheroni constant and the average of fractional parts. Our theorem reveals that the fractional parts are, in general, connected to the values of the Riemann zeta function $\zeta(s)$. Hence, all of the values of $\zeta(s)$ at positive integers can be expressed as limiting averages of products of fractional parts with the powers of positive integers. 

\bibliography{RH_Proof}
\bibliographystyle{apalike}

\end{document}